\newtheorem{theorem}{Theorem}[section]
\newtheorem{proposition}[theorem]{Proposition}
\newtheorem{question}{Question}
\newtheorem{example}{Example}
\begin{document}
	\title{A weakly Lindel\"of space which does not have the property $^*\mathcal{U}_{fin}(\mathcal{O},\mathcal{O})$}
	\author{ Servet Soyarslan\footnote{Corresponding author. \newline E-mail addresses:  servet.soyarslan@gmail.com (S. Soyarslan), osul@metu.edu.tr (S. \"Onal). },  S\"uleyman \"Onal$^{(a)}$,
	}
	
	\maketitle	
	
	{\scriptsize a. Middle East Technical University, Department of Mathematics, 06531 Ankara, Turkey.} 
		
\begin{abstract}
We show that there exists a Tychonoff weakly Lindelöf space  which does not have the property $^*\mathcal{U}_{fin}(\mathcal{O},\mathcal{O}).$ This result answers the following open questions. \cite{singh2021further} Does a weakly Lindelöf space have  the property $^*\mathcal{U}_{1}(\mathcal{O},\mathcal{O})$? \cite{Song} Is there a Tychonoff 1-star-Lindel\"of space which does not have the property $^*\mathcal{U}_{fin}(\mathcal{O},\mathcal{O})$?
\end{abstract}

{\scriptsize \textbf{Keywords}: Selection principle, Lindel\"of, Star-covering, Stone–\v{C}ech compactification.}
	
	{\scriptsize\textbf{MSC:} 54D35,	54D20.}

\section{Introduction}

In this paper, $\mathbb{N}=\{0,1,2,\ldots\}$. $cl_{X}(A)$ denotes the closure of $A$ in the space $X$. $\omega$ denotes the first infinite cardinal. $\beta X$ denotes the  Stone–\v{C}ech compactification of the space $X$. $|A|$ denotes the cardinality of the set $A$.

Let $A$ be a subset of a space $X$ and $\mathcal{U}$ a family of subsets of $X$. Then $St(A,\mathcal{U})=\bigcup\{U\in \mathcal{U}: U\cap A\neq \emptyset\}.$

The following definitions and some references about this topic can  be found in \cite{singh2021further} and \cite{Song}.

$\mathcal{O}$ denotes the family of all open covers of a space $X$.

A space X is said to have the property $^*\mathcal{U}_{1}(\mathcal{O},\mathcal{O})$ if for each sequence $(\mathcal{U}_n:n \in \omega)$ of elements of $\mathcal{O}$, there is a sequence $(U_n:n \in \omega)$ such that for each $n\in \omega$,  $U_n \in \mathcal{U}_n$ and 
$\{St(\bigcup_{i\in \omega} U_i, \mathcal{U}_n) :n\in \omega\} \in \mathcal{O}$.
	
A space X is said to have the $^*\mathcal{U}_{fin}(\mathcal{O},\mathcal{O})$ if for each sequence  $(\mathcal{U}_n:n \in \omega)$ of elements of $\mathcal{O}$, there is a sequence $(\mathcal{V}_n:n\in \omega)$ such that for each $n\in \mathbb{N}$, $\mathcal{V}_n$ is a finite subset of $ \mathcal{U}_n$ and 
$\{St(\bigcup_{i\in \omega} (\cup \mathcal{V}_i), \mathcal{U}_n) :n\in \omega\} \in \mathcal{O}$.

A space $X$ is 1-star-Lindel\"of if for every open cover $\mathcal{U}$ of $X$, there exists a countable subfamily $\mathcal{V}$ of $\mathcal{U}$ such that $St(\cup \mathcal{V}, \mathcal{U}) =X$.

A space $X$ is  weakly Lindel\"of if for every open cover $\mathcal{U}$ of $X$, there exists a countable subfamily $\mathcal{V}$ of $\mathcal{U}$ such that $cl_X(\cup \mathcal{V}) =X$.

In the next section, we give a  positive answer to the following questions.

\begin{question}	(Question 2.8. in \cite{Song}) Is there a Tychonoff 1-star-Lindel\"of space which does not have the property $^*\mathcal{U}_{fin}(\mathcal{O},\mathcal{O})$? 
\end{question}

\begin{question} (Question 4.14. in \cite{singh2021further})
Does a weakly Lindelöf space have the property $^*\mathcal{U}_{1}(\mathcal{O},\mathcal{O})$?

\end{question}

\section{An Example}

Clearly,  if $X$ does not have the property $^*\mathcal{U}_{fin}(\mathcal{O},\mathcal{O})$, then  $X$ does not have the property $^*\mathcal{U}_{1}(\mathcal{O},\mathcal{O})$.
Clearly, if $X$ is weakly Lindel\"of, then $X$ is 1-star-Lindel\"of. Thus, if we give an example of a Tychonoff weakly Lindelöf space $X$ which does not have the property $^*\mathcal{U}_{fin}(\mathcal{O},\mathcal{O})$, then we  answers both Questions 1 and 2.

 First, we give the following  known fact. Then, we give that example. 
\begin{proposition}\label{Pro}
	Let us consider the discrete topological space $\mathbb{N}^\mathbb{N}$ and let $M,N \subseteq \mathbb{N}^\mathbb{N}$. Then, we have the followings. 
	
	(1) If $M\cap N  = \emptyset$, then $cl_{\beta \mathbb{N}^\mathbb{N}}(M)\cap cl_{\beta \mathbb{N}^\mathbb{N}}(N) = \emptyset$.

 (2) If $|M\cap N|<\omega$, then $cl_{\beta \mathbb{N}^\mathbb{N}}(M)\cap cl_{\beta \mathbb{N}^\mathbb{N}}(N) -\mathbb{N}^\mathbb{N}=\emptyset$.
 
 (3) Let $A$ be any subset of $\mathbb{N}^\mathbb{N}$. Then, $cl_{\beta \mathbb{N}^\mathbb{N}}(A)$ is a clopen subset  of ${\beta \mathbb{N}^\mathbb{N}}.$

\end{proposition}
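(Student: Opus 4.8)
The plan is to prove all three parts using the key fact that $\mathbb{N}^{\mathbb{N}}$ is a discrete space, so that $\beta\mathbb{N}^{\mathbb{N}}$ is the Stone–Čech compactification of a discrete space. The single most useful tool here is the universal property: any bounded continuous function on a discrete space extends continuously to $\beta\mathbb{N}^{\mathbb{N}}$, and in particular any characteristic function $\chi_A$ of a subset $A\subseteq\mathbb{N}^{\mathbb{N}}$ is continuous (since the domain is discrete) and hence extends to a continuous $f\colon\beta\mathbb{N}^{\mathbb{N}}\to[0,1]$. This is the mechanism by which disjoint sets get separated in the compactification.

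Let me think about the plan carefully.

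For part (1), suppose $M\cap N=\emptyset$. I would consider the characteristic function $\chi_M\colon\mathbb{N}^{\mathbb{N}}\to\{0,1\}$, which is continuous because the domain is discrete. By the universal property of the Stone–Čech compactification it extends to a continuous $f\colon\beta\mathbb{N}^{\mathbb{N}}\to[0,1]$. Since $M\subseteq f^{-1}(\{1\})$ and the latter is closed, I get $cl_{\beta\mathbb{N}^{\mathbb{N}}}(M)\subseteq f^{-1}(\{1\})$; since $N\subseteq f^{-1}(\{0\})$ and $M\cap N=\emptyset$ gives $N\subseteq\mathbb{N}^{\mathbb{N}}\setminus M$, I similarly get $cl_{\beta\mathbb{N}^{\mathbb{N}}}(N)\subseteq f^{-1}(\{0\})$. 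The two closed sets $f^{-1}(\{1\})$ and $f^{-1}(\{0\})$ are disjoint, so the closures are disjoint. (Equivalently, this is just the standard fact that in $\beta D$ for discrete $D$, the closures of disjoint sets are disjoint, because disjoint subsets of a discrete space are completely separated.)

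For part (2), assume $|M\cap N|<\omega$. Write $M\cap N=F$, a finite set. The idea is to reduce to part (1) by removing the finite overlap: the sets $M\setminus F$ and $N\setminus F$ are disjoint, so by part (1) their closures are disjoint. The only points that could lie in $cl_{\beta\mathbb{N}^{\mathbb{N}}}(M)\cap cl_{\beta\mathbb{N}^{\mathbb{N}}}(N)$ but not in $cl_{\beta\mathbb{N}^{\mathbb{N}}}(M\setminus F)\cap cl_{\beta\mathbb{N}^{\mathbb{N}}}(N\setminus F)$ must come from the finite set $F$. Since $F$ is finite and the points of $\mathbb{N}^{\mathbb{N}}$ are isolated in $\beta\mathbb{N}^{\mathbb{N}}$ (discreteness persists: each point of a discrete $D$ is isolated in $\beta D$), we have $cl_{\beta\mathbb{N}^{\mathbb{N}}}(M)=cl_{\beta\mathbb{N}^{\mathbb{N}}}(M\setminus F)\cup F$ and similarly for $N$, where the added points lie in $\mathbb{N}^{\mathbb{N}}$. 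Intersecting and then subtracting $\mathbb{N}^{\mathbb{N}}$ kills exactly the contributions of $F$, so $cl_{\beta\mathbb{N}^{\mathbb{N}}}(M)\cap cl_{\beta\mathbb{N}^{\mathbb{N}}}(N)\setminus\mathbb{N}^{\mathbb{N}}=\emptyset$.

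For part (3), let $A\subseteq\mathbb{N}^{\mathbb{N}}$ be arbitrary. I would again take the continuous extension $f\colon\beta\mathbb{N}^{\mathbb{N}}\to[0,1]$ of $\chi_A$. Because $\mathbb{N}^{\mathbb{N}}$ is dense in $\beta\mathbb{N}^{\mathbb{N}}$ and $f$ is continuous with $f\restriction\mathbb{N}^{\mathbb{N}}$ taking only the values $0$ and $1$, the range of $f$ is contained in $\overline{\{0,1\}}=\{0,1\}$, so $f$ is in fact a continuous map into the discrete two-point space. Then $cl_{\beta\mathbb{N}^{\mathbb{N}}}(A)=f^{-1}(\{1\})$, which is both open and closed since $\{1\}$ is clopen in $\{0,1\}$; this is the cleanest way to exhibit the clopen-ness. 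Applying part (1) to $A$ and $\mathbb{N}^{\mathbb{N}}\setminus A$ gives the complementary statement directly as well. The main thing to be careful about is justifying that the extension really lands in $\{0,1\}$ rather than the full interval, which follows from density of $\mathbb{N}^{\mathbb{N}}$ together with continuity, and arguing that each point of $\mathbb{N}^{\mathbb{N}}$ remains isolated in $\beta\mathbb{N}^{\mathbb{N}}$ (needed in part (2)); neither is a deep obstacle, but both rely essentially on discreteness, which is why the proposition is stated only for the discrete space $\mathbb{N}^{\mathbb{N}}$.
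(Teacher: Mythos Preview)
Your proof is correct and follows essentially the same approach as the paper: part (2) is reduced to part (1) by stripping off the finite overlap $F=M\cap N$ and using that $cl_{\beta\mathbb{N}^{\mathbb{N}}}(F)=F$, exactly as the paper does. The only difference is cosmetic: for (1) and (3) the paper simply cites Corollaries 3.6.4 and 3.6.5 of Engelking, whereas you unpack these citations into the direct characteristic-function argument via the universal property of $\beta$, which is precisely the content of those corollaries in the discrete case.
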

\begin{proof} (1) Because $\mathbb{N}^\mathbb{N}$ is discrete, $M$ and $N$ are closed  in $\mathbb{N}^\mathbb{N}$. Because  $M\cap N  = \emptyset$ and $\mathbb{N}^\mathbb{N}$ is normal,
 from Corollary 3.6.4  in \cite{engelking1989general}, $cl_{\beta \mathbb{N}^\mathbb{N}}(M)\cap cl_{\beta \mathbb{N}^\mathbb{N}}(N) = \emptyset$.

(2)	$cl_{\beta \mathbb{N}^\mathbb{N}}(M)=cl_{\beta \mathbb{N}^\mathbb{N}}((M-(M\cap N))\cup (M\cap N))=cl_{\beta \mathbb{N}^\mathbb{N}}(M-(M\cap N))\cup cl_{\beta \mathbb{N}^\mathbb{N}}(M\cap N)$. 
Because $M\cap N$ is finite, $M\cap N$ is closed in ${\beta \mathbb{N}}$. Thus,
	 $cl_{\beta \mathbb{N}^\mathbb{N}}(M\cap N)=M\cap N$. Therefore, 
	$cl_{\beta \mathbb{N}^\mathbb{N}}(M)=cl_{\beta \mathbb{N}^\mathbb{N}}(M-(M\cap N))\cup (M\cap N)$. Hence $cl_{\beta \mathbb{N}^\mathbb{N}}(M)\cap cl_{\beta \mathbb{N}^\mathbb{N}}(N)=\big(cl_{\beta \mathbb{N}^\mathbb{N}}(M-(M\cap N))\cap cl_{\beta \mathbb{N}^\mathbb{N}}(N)\big) \cup (M\cap N \cap cl_{\beta \mathbb{N}^\mathbb{N}}(N) )$. From (1), $cl_{\beta \mathbb{N}^\mathbb{N}}(M-(M\cap N))\cap cl_{\beta \mathbb{N}^\mathbb{N}}(N)=\emptyset$. Therefore $cl_{\beta \mathbb{N}^\mathbb{N}}(M)\cap cl_{\beta \mathbb{N}^\mathbb{N}}(N)=M\cap N \cap cl_{\beta \mathbb{N}^\mathbb{N}}(N)$. Thus, $cl_{\beta \mathbb{N}^\mathbb{N}}(M)\cap cl_{\beta \mathbb{N}^\mathbb{N}}(N) -\mathbb{N}^\mathbb{N}=\emptyset$.

(3) This is obvious from Corollary 3.6.5  in \cite{engelking1989general}.

\end{proof}

\begin{example}

There is a Tychonoff weakly Lindelöf space $X$ which does not have the property $^*\mathcal{U}_{fin}(\mathcal{O},\mathcal{O}).$
	
\end{example}

\begin{proof}

We put the discrete topology on $\mathbb{N}^\mathbb{N}$ and consider the Stone–\v{C}ech compactification $\beta\mathbb{N}^\mathbb{N}$.

Throughout this proof, for the sake of shortness,  $\overline{A}$ and $A^c$ denote the closure and complement of $A$ in the space $\beta\mathbb{N}^\mathbb{N}$, respectively (i.e. $\overline{A}=cl_{\beta\mathbb{N}^\mathbb{N}}(A)$  and $A^c=\beta\mathbb{N}^\mathbb{N}\setminus A$).

 For any $i_0,i_1,\ldots,i_k\in \mathbb{N}$,
let us define  $$A(i_0,i_1,\ldots, i_k)=\{x\in \mathbb{N}^\mathbb{N}: x_0=i_0,x_1=i_1,\ldots, x_k=i_k\}.$$ 
Then, 
define  $$B(k)=\bigcup \big\{\overline{A(i_0,i_1,\ldots,i_k)}:i_0,\ldots,i_k\in \mathbb{N}\big\}.$$ Now, set $$Y=\bigcap_{k\in \mathbb{N}} B(k).$$

Let $<$ be the  product order on $\mathbb{N}^\mathbb{N}$, i.e, $x<y$ iff $x_i<y_i$ for all $i\in \mathbb{N}$. 

Say $|\mathbb{N}^\mathbb{N}|=\kappa$. Then, we can write $\mathbb{N}^\mathbb{N}=\{j_i:i\in \kappa\}$ such that $j_l\neq j_t$ if $l\neq t$.

  \textbf{Claim 1.}  Take any $M\subseteq \mathbb{N}^\mathbb{N}$ and $x\in \mathbb{N}^\mathbb{N}$. If $|M|< \kappa$, then there exists a subset $\{y_i:i\in \omega\}\subseteq \mathbb{N}^\mathbb{N}-M$ such that $x<y_0<y_1<y_2<\ldots$.

\textbf{Proof of  Claim 1.} For any $z=(z_n)_n\in\mathbb{N}^\mathbb{N}$, set $N_z=\{y\in \mathbb{N}^\mathbb{N}: z<y\}$.   For any $z_n$ of  $z=(z_n)_n$  if we set $P_n=\{z_n+m:m\in \mathbb{N}-\{0\}\}$, then we get $N_z=\Pi_{n\in \omega} P_n$ and $|P_n|=\omega$. So, $|N_z|=|\Pi_{n\in \omega} P_n|=\kappa$. Therefore, $|N_z-M|=\kappa$ because  $|M|< \kappa$. 

We use induction. Suppose we get $y_0,\ldots ,y_n\in N_x-M$ such that $x<y_0<y_1<\ldots<y_n$. Because  $|N_{y_n}-M|=\kappa$, we can choose a $y_{n+1}\in N_{y_n}-M$. Thus, from the definition of $N_{y_n}$, we get $y_0,\ldots ,y_n, y_{n+1}\in N_x-M$ and   $x<y_0<y_1<\ldots<y_n<y_{n+1}$. From the induction, proof of Claim 1 is completed.

Now,  for all $i\in \kappa$ by using transfinite induction, we will define a  $D_{j_i}\subseteq \mathbb{N}^\mathbb{N}$. First,  for $j_0\in \mathbb{N}^\mathbb{N}$ we choose $\omega$ many elements of $\mathbb{N}^\mathbb{N}$
 which have the condition that $j_0<x_0<x_1<\ldots$ and define $D_{j_0}=\{x_k:k\in \omega\}\subseteq \mathbb{N}^\mathbb{N}$. Then, for any $\alpha\in \kappa$ suppose  we get $D_{j_i}\subseteq \mathbb{N}^\mathbb{N}$ with $|D_{j_i}|=\omega$ for all
$0\leq i<\alpha$. After that, for ${j_\alpha}\in \mathbb{N}^\mathbb{N}$ we choose $\omega$ many elements of $\mathbb{N}^\mathbb{N}-\big((\bigcup_{i<\alpha}D_{j_i})\cup \{j_i:i\in \alpha\}\big)$
which have the condition that $j_\alpha<x_0<x_1<\ldots$ and define $D_{j_\alpha}=\{x_k:k\in \omega\}\subseteq \mathbb{N}^\mathbb{N}$. Note that we can get these $D_{j_i}$ for all $i\in \kappa$ because  of Claim 1. So,  we get a disjoint family $\{D_{j_i}\subseteq \mathbb{N}^\mathbb{N}: i\in \kappa\}$. Now,
 define $$X=Y\cup (\bigcup_{i\in \kappa}\overline{D_{j_i}}).$$
 
The topology of the space $X$ is the subspace topology inherited from $\beta\mathbb{N}^\mathbb{N}$. We will see that this space $X$ is weakly Lindelöf and does not have the property  $^*\mathcal{U}_{fin}(\mathcal{O},\mathcal{O})$.

 \textbf{Claim 2.} $X$ is a weakly Lindel\"of space .

 \textbf{Proof of  Claim 2.} Let $\mathcal{U}$ be an open cover of $X$. 
 
 Because $$Y=\bigcap_{k\in \mathbb{N}} B(k)=\bigcap_{k\in \mathbb{N}}(\bigcup \big\{\overline{A(i_0,\ldots,i_k)}:i_0,\ldots,i_k\in \mathbb{N} \big\}),$$
then by using distributivity and eliminating empty sets from (1) of Proposition \ref{Pro}, we can get

\begin{equation}\label{1}
 Y=\bigcup \big\{\overline{A(x_0)}\cap \overline{{A(x_0,x_1)}}\cap \overline{{A(x_0,x_1,x_2)}}\cap \ldots: x=(x_i)_i\in \mathbb{N}^\mathbb{N} \big \}.
\end{equation}

Now, set $$\mathcal{N}=\{\overline{A(x_0)}\cap \overline{{A(x_0,x_1)}}\cap \overline{{A(x_0,x_1,x_2)}}\cap \ldots:x=(x_i)_i\in \mathbb{N}^\mathbb{N}\}$$ and 

$$\mathcal{M}=\{\mathcal{G}\subseteq \{\overline{A(i_0,\ldots,i_k}):i_0,\ldots,i_k, k\in \mathbb{N} \}:|\mathcal{G}|<\omega\}.$$

Take any $N\in \mathcal{N}$. Note that $N$  is a compact subspace of $\beta\mathbb{N}^\mathbb{N}$ because $N$ is closed subspace of compact space $\beta\mathbb{N}^\mathbb{N}$. Because $N\subseteq Y \subseteq (\cup \mathcal{U})$ and $N$ is compact, we can choose a finite subfamily $\mathcal{U}_N$ of the cover  $\mathcal{U}$ of $X$ such that $N\subseteq \cup \mathcal{U}_N.$ Because $\cup \mathcal{U}_N$ is open in $X$, we can choose a  $(\cup \mathcal{U}_N)'$ open subset of $\beta\mathbb{N}^\mathbb{N}$ such that $X\cap(\cup \mathcal{U}_N)'=\cup \mathcal{U}_N$. Now, by using these chosen sets,  we can give the following subclaim.

\textbf{Subclaim 1. }  For any $N\in \mathcal{N}$, there exists a $\mathcal{G}_N\in \mathcal{M}$ such that $N\subseteq (\cap\mathcal{G}_N)$ and  $\cap\mathcal{G}_N\subseteq (\cup \mathcal{U}_N)'$.

\textbf{Proof of  Subclaim 1.} Suppose $N=\overline{A(x_0)}\cap \overline{{A(x_0,x_1)}}\cap \overline{{A(x_0,x_1,x_2)}}\cap \ldots$, for a fixed  $x=(x_i)_i\in \mathbb{N}^\mathbb{N}.$ Then, $$\mathcal{H}=\{(\cup \mathcal{U}_N)',  \big(\overline{A(x_0))}\big)^c,  \big(\overline{A(x_0, x_1)}\big)^c,  \big(\overline{{A(x_0,x_1,x_2)}}\big)^c, \ldots \}$$ is an open cover of $\beta\mathbb{N}^\mathbb{N}$. So, there is a finite subcover $\mathcal{K}$ of $\mathcal{H}$ because $\beta\mathbb{N}^\mathbb{N}$ is compact. If $\mathcal{K}=\{(\cup \mathcal{U}_N)'\}$, then we can set $\mathcal{G}_N=\{\overline{A(x_0)}\}$. If $\mathcal{K}\neq \{(\cup \mathcal{U}_N)'\}$, then we set $\mathcal{G}_N=\{U: (U)^c\in (\mathcal{K}-\{(\cup \mathcal{U}_N)'\})\}$. Note that the elements of $\mathcal{G}_N$ have the forms $\overline{A(x_0, \ldots ,x_k)}$. Then, $N\subseteq (\cap\mathcal{G}_N)$ because of  definition of $N$ and $\mathcal{G}_N$. In addition, for any $x\in\cap\mathcal{G}_N$, there exists a $K\in \mathcal{K}$ such that $x\in K$ because $\bigcup \mathcal{K}=\beta\mathbb{N}^\mathbb{N}$. Because of definition of $\mathcal{G}_N$, $x\in K=(\cup \mathcal{U}_N)'$. Thus $\cap\mathcal{G}_N\subseteq (\cup \mathcal{U}_N)'$. Hence $N\subseteq (\cap\mathcal{G}_N) \subseteq (\cup \mathcal{U}_N)'.$  The proof of Subclaim 1 is completed.

Note that $|\mathcal{M}|=\omega$ because $|\{\overline{A(i_0,\ldots,i_k)}:i_0,\ldots,i_k, k\in \mathbb{N} \}|=\omega$. Now take any $\mathcal{G}\in \mathcal{M}$. If  $\{\mathcal{L}\subseteq {\mathcal{U}}:\mathcal{L}$ is finite and $\cap \mathcal{G}\subseteq (\cup\mathcal{L})'\}\neq \emptyset$, then  choose  and fix an $\mathcal{U}_\mathcal{G}\in \{\mathcal{L}\subseteq {\mathcal{U}}:\mathcal{L}$ is finite and $\cap \mathcal{G}\subseteq (\cup\mathcal{L})'\}$ for this $\mathcal{G}\in \mathcal{M}$. Now define $\mathcal{M}^*=\{\mathcal{G}\in \mathcal{M}: \mathcal{U}_\mathcal{G}$ can be chosen for $\mathcal{G}\}$. From Subclaim 1, $\mathcal{M}^*\neq \emptyset$. Then, define

 $$\mathcal{V}=\bigcup\{\mathcal{U}_{\mathcal{G}}\subseteq \mathcal{U}:  \mathcal{G}\in \mathcal{M}^* \}.$$ Then, $|\mathcal{V}|\leq \omega$ because each $\mathcal{U}_{\mathcal{G}}$ is finite and  $|\mathcal{M}^*|\leq \omega$. 
 	
 By using the fact $\cap \mathcal{G}\subseteq (\cup \mathcal{U}_{\mathcal{G}})'$  and  Subclaim 1, we get that for any $N\in \mathcal{N}$ there exists a 
 $\mathcal{G} \in \mathcal{M}^*$ such that
  $N\subseteq (\cap\mathcal{G}) \subseteq (\cup \mathcal{U}_{\mathcal{G}})'$. Thus, $N\subseteq  (\cup \mathcal{U}_{\mathcal{G}})'\cap X =(\cup \mathcal{U}_{\mathcal{G}})$ for any $N\in \mathcal{N}$. Therefore, by considering (\ref{1}),
 	 $Y=\bigcup \mathcal{N}\subseteq  (\cup(\bigcup\{\mathcal{U}_{\mathcal{G}}\subseteq \mathcal{U}:  \mathcal{G}\in \mathcal{M}^*	\}))=\bigcup \mathcal{V}.$ Thus, $Y\subseteq \bigcup\mathcal{V}$ and   $\mathcal{V}$ is a countable subfamily of $\mathcal{U}$.

 On the other hand $\mathbb{N}^\mathbb{N}\subseteq Y$ because $x\in A(x_0,x_1,\ldots,x_i)\subseteq B(i)$ for any $x\in \mathbb{N}^\mathbb{N}$ and any $i\in \mathbb{N}$.
 
 Hence $\beta\mathbb{N}^\mathbb{N}=\overline{\mathbb{N}^\mathbb{N}}\subseteq \overline{Y}\subseteq \overline{\bigcup\mathcal{V}}\subseteq {\beta\mathbb{N}^\mathbb{N}}.$ Then $\beta\mathbb{N}^\mathbb{N}= \overline{\bigcup\mathcal{V}}.$
 Thus, $cl_{X}(\bigcup\mathcal{V})=X$. Hence, $X$ is weakly Lindel\"of because $\mathcal{V}$ is a countable subcover of the cover $\mathcal{U}$ of $X$. Proof of Claim 2 is completed.
 
 If we prove the following claim the proof will be completed.

  \textbf{Claim 3.} $X$ does not have the property  $^*\mathcal{U}_{fin}(\mathcal{O},\mathcal{O})$.

 \textbf{Proof of  Claim 3.} For any $k\in \omega$ let us define $$\mathcal{U}_k=\{\overline{{A(i_0,i_1,\ldots, i_k)}}\cap X : i_0,\ldots ,i_k\in \mathbb{N}\}\cup \{\overline{D_{j_i}}:i\in \kappa\}.$$  
 
 From $(3)$ of  Proposition \ref{Pro}, $\overline{{A(i_0,i_1,\ldots, i_k)}}$ and  $\overline{D_{j_i}}$ are open in ${\beta\mathbb{N}^\mathbb{N}}.$ Thus, it is easy to see that $\mathcal{U}_k$ is an open cover of $X$ for any $k\in \mathbb{N}$. Now, define the sequence $(\mathcal{U}_k:k\in \omega)$ of  open covers of $X$. Take any sequence $(\mathcal{V}_k:k\in \omega)$ such that $\mathcal{V}_k$ is a finite subset of $\mathcal{U}_k$. 
 
 Now, we will see that the family  $\{st(\cup( \bigcup_{i\in \omega  }\mathcal{V}_i), \mathcal{U}_k): k\in \omega\}$ is 
 not an open cover of $X$. To see it, by supposing that for any $k\in \mathbb{N}$ there exists $n(k)\in \mathbb{N}$ 
 such that $$\mathcal{A}_k=\{\overline{A(i^{t}_0,i^{t}_1,\ldots,i^{t}_k)}\cap X : 0\leq t\leq n(k)\}\subseteq  \mathcal{U}_k,$$ 
   and without loss of generality we may assume that $$\bigcup_{i\in \omega  }\mathcal{V}_i=(\bigcup\{\mathcal{A}_k:k\in \omega\})\cup (\{\overline{D_{j_{i_s}}}:s\in \omega\}).$$
 
 Now we will define a special point $y^*=(y_k )_k\in \mathbb{N}^\mathbb{N}$ by defining each component $y_k$ of $y^*$.
  Let $e_{s,l}$ denotes the $l$-th component of $j_{i_s}$, i.e., $e_{s,l}=n_l$ if $j_{i_s}=(n_0,n_1,n_2,\ldots)\in\mathbb{N}^\mathbb{N}$.  
  Now,  let $A_k$ be the set of   indexes of the elements of $\mathcal{A}_k$, i.e., $A_k=\{i^{t}_r: 0\leq r\leq k,$   
  and $  0\leq t\leq n(k)
  \}$.  
  Now, define $y_k=1+\max(A_k\cup\{e_{k,k}\})$ for all $k\in \mathbb{N}$ and so we get the special point $y^*=(y_k )_k$ of  $\mathbb{N}^\mathbb{N}.$ Thus, there exists an $i^*\in \kappa$ such that $j_{i^*}=y^*$. Therefore, $D_{j_{i^*}}=D_{y^*}\in \{D_{j_i}:i\in \kappa\}$.  

\textbf{Subclaim 2. } We have the following.

(i) $(\cup(\bigcup_{i\in \omega}\mathcal{V}_i))\cap \overline{D_{y^*}}=\emptyset$;

(ii) $\overline{D_{y^*}}-\mathbb{N}^\mathbb{N}\neq \emptyset$;

(iii) For any $x\in \overline{D_{y^*}}-\mathbb{N}^\mathbb{N}$ and
 for each $k\in \mathbb{N}$, $x\notin st(\cup(\bigcup_{i\in \omega}\mathcal{V}_i), \mathcal{U}_k).$

\textbf{Proof of  Subclaim 2.}

(i) Take any $d=(d_k)_k \in D_{y^*}$. So, for any $k\in \mathbb{N}$, we know that $d_k>y_k>i^{t}_k$ where $0\leq t\leq n(k)$. Thus, for any $k\in \mathbb{N}$ and $0\leq t\leq n(k)$, we get $d\notin A(i^t_0,\ldots,i^t_k)$. So, $D_{y^*}\cap A(i^t_0,\ldots,i^t_k)=\emptyset$. Thus, from (1) 
of  Proposition \ref{Pro}, $\overline{D_{y^*}}\cap \overline{A(i^t_0,\ldots,i^t_k)}=\emptyset$ for any $k\in \mathbb{N}$ and $0\leq t\leq n(k)$.
Therefore $\overline{D_{y^*}}\cap(\cup(\bigcup\{\mathcal{A}_k:k\in \omega\}))=\emptyset$. 

On the other hand,  $y_s>e_{s,s}$  for any $s\in \omega$. Thus, $y^*\neq j_{i_s}$ for any $s\in \omega$. So, $D_{y^*}\notin \{D_{j_{i_s}}:s\in \omega\}$. Therefore, $D_{y^*}\cap D_{j_{i_s}}=\emptyset$ for any $s\in \omega$ because the family $\{D_{j_i}:i\in \kappa\}$ is disjoint. Thus, from (1) of  Proposition \ref{Pro}, $\overline{D_{y^*}}\cap \overline{D_{j_{i_s}}}=\emptyset$ for any $s\in \omega$.
Hence $\overline{D_{y^*}}\cap(\bigcup\{\overline{D_{j_{i_s}}}:s\in\omega\})=\emptyset$.

  Thus $(\cup(\bigcup_{i\in \omega}\mathcal{V}_i))\cap \overline{D_{y^*}}=\emptyset$.
  
  (ii) Assume $\overline{D_{y^*}}-\mathbb{N}^\mathbb{N}= \emptyset$. Then $\overline{D_{y^*}}\subseteq\mathbb{N}^\mathbb{N}$. Thus, from  (3) of Proposition \ref{Pro},  $\{\{x\}:x\in\overline{D_{y^*}}\}$ is an open cover of the compact set $\overline{D_{y^*}}$. But $\{\{x\}:x\in\overline{D_{y^*}}\}$  has no finite subcover. This is a contradiction.
  
  (iii) Take any $x\in \overline{D_{y^*}}-\mathbb{N}^\mathbb{N}$ and a $k\in \mathbb{N}$. Now, take any $A\in \mathcal{U}_k$ which has the condition that  $A\cap (\cup(\bigcup_{i\in \omega}\mathcal{V}_i))\neq\emptyset.$ To see $x\notin st(\cup(\bigcup_{i\in \omega}\mathcal{V}_i), \mathcal{U}_k)$, we will see that $x\notin A$. From the definition of   
  $ \mathcal{U}_k$, $A\in  \{\overline{{A(i_0,i_1,\ldots, i_k)}}\cap X : i_0,\ldots,i_k\in \mathbb{N}\}$ or $A\in \{\overline{D_{j_i}}:i\in \kappa\}$. Suppose $A=  \overline{{A(i_0,i_1,\ldots, i_k)}}\cap X$ for some $i_0,\ldots,i_k\in \mathbb{N}$. Then $|{A(i_0,i_1,\ldots, i_k)}\cap D_{y^*}|\leq 1$ because elements of $D_{y^*}$ are $<$-increasing. Therefore, from (2) of Proposition \ref{Pro},
   $\overline{A(i_0,i_1,\ldots, i_k)}\cap \overline{D_{y^*}}-\mathbb{N}^\mathbb{N}=\emptyset$. Thus $A\cap \overline{D_{y^*}}-\mathbb{N}^\mathbb{N}=\emptyset$. Therefore, $x\notin A$ because $x\in \overline{D_{y^*}}-\mathbb{N}^\mathbb{N}$. Now, suppose $A\in \{\overline{D_{j_i}}:i\in \kappa\}$. From (i) of this subclaim, $A\neq \overline{D_{y^*}}$ because $A\cap (\cup(\bigcup_{i\in \omega}\mathcal{V}_i))\neq\emptyset$. Thus, there is a $j_i\in \mathbb{N}^\mathbb{N}$ such that  $y^*\neq j_i$ and $A=\overline{D_{j_i}}$. 
   Because $y^*\neq j_i$, $D_{j_i}\cap D_{y^*}=\emptyset$. So, from (1) of Proposition \ref{Pro}, $A\cap \overline{D_{y^*}}=\overline{D_{j_i}}\cap \overline{D_{y^*}}=\emptyset$. Hence $x\notin A$. The proof of Subclaim 2 is completed.
  
  From Subclaim 2, $\{st(\cup(\bigcup_{i\in \omega}\mathcal{V}_i), \mathcal{U}_k):k\in \omega\}$ is not an open cover of $X$. (Note that $\cup(\bigcup_{i\in \omega}\mathcal{V}_i)=\bigcup_{i\in \omega}(\cup\mathcal{V}_i)$.) Hence, $X$ is not  $^*\mathcal{U}_{fin}(\mathcal{O},\mathcal{O})$.
    
\end{proof}

\bibliographystyle{plain}



\end{document}